\write\@auxout{\catcode`_=12 }% <-- add this
\write\@auxout{\catcode`^=12 }% <-- add this
\DeclareMathAlphabet{\mathsfit}{T1}{\sfdefault}{\mddefault}{\sldefault}
\SetMathAlphabet{\mathsfit}{bold}{T1}{\sfdefault}{\bfdefault}{\sldefault}
\newcommand\Qmat{\mathsfit{Q}}
\newcommand\R{\mathbb{R}}
\renewcommand\st{\mathrel{\ooalign{$\,\backepsilon$\cr\lower .7pt\hbox{\kern 1pt$-\,$}}}}
\title{}
\date{}
\theoremstyle{plain}
\newtheorem{theorem}{Theorem}[section]
\newtheorem{lemma}[theorem]{Lemma}
\newtheorem{proposition}[theorem]{Proposition}
\theoremstyle{definition}
\newtheorem{remark}[theorem]{Remark}
\newtheorem{example}[theorem]{Example}
\title[Pythagoras number for polynomials of degree 4 in 5 variables]{On the Pythagoras number for polynomials of degree 4 in 5 variables}
\author{Santiago Laplagne}
\address{Instituto de C\'alculo, FCEN, Universidad de Buenos Aires - Ciudad Universitaria, Pabell\'on I - (C1428EGA) - Buenos Aires, Argentina}
\email{slaplagn@dm.uba.ar}
\begin{document}

\begin{abstract}
We give an example of a polynomial of degree 4 in 5 variables that is the sum of squares of 8 polynomials and cannot be decomposed as the sum of 7 squares. This improves the current existing lower bound of 7 polynomials for the Pythagoras number $p(5,4)$.

% REQUIRED
%\begin{keywords}
%  positive polynomials, sum of squares, Gram spectrahedron, Hilbert function
%\end{keywords}

% REQUIRED
%\begin{AMS}
%  14P10, 14Q20, 68W30
%\end{AMS}

%We study strictly positive polynomials in the boundary of the SOS cone and extreme rays in the dual cone.
%The cases of polynomials of degree 6 in 3 variables and degree 4 in 4 variables have been completely characterized by G. Blekherman \cite{blekherman}. We focus on the cases of more variables or higher degree, for which very little is known. We show that a conjecture by Eisenbud, Green and Harris gives bounds on the ranks of the associated quadratic forms. In particular, for the case of homogeneous quadric polynomials in 5 variables, for which the conjecture has recently been proved, we obtain bounds on the maximum number of polynomials that can appear in a sum of squares decomposition, which improves the known general bounds. We provide several new examples in different cases showing that the bounds predicted by the conjectured are attainable.
\end{abstract}

\maketitle

\section{Introduction}
\label{introduction}

The decomposition of a real multivariate polynomial as a sum of squares of real polynomials is a central problem in real algebraic geometry, with many theoretical and practical applications.
An interesting and difficult problem is to determine the minimum number $p(n,2d)$ such that any polynomial of degree $2d$ in $n$ variables that is a sum of squares can be decomposed as a sum of $p(n,2d)$ squares. This number is called the Pythagoras number of $n$-ary forms of degree $2d$.

In \cite{scheiderer2017}, C. Scheiderer obtained lower bounds for $p(n,2d)$ for all $n \ge 3$ and $d \ge 2$, assuming a conjecture by Iarrovino and Kanev that can be verified computationally for small values of $n$ and $d$.
The lower bounds obtained in that paper are close to the known upper bounds for the Pythagoras number. A natural question is whether these lower bounds are sharp. For the case of polynomials of degree 4 in 5 variables, the lower bound given in \cite{scheiderer2017} is 7. That is, there exists a polynomial that is the sum of 7 squares and cannot be decomposed as the sum of 6 squares. In this paper, we give an explicit example that improves this bound. Our polynomial is a sum of 8 squares and cannot be decomposed as the sum of 7 squares. As far as we know, this is the first example that improves the lower bounds given in \cite{scheiderer2017} for any $n$ and $d$, showing that the examples constructed there are not always optimal.

\subsection{Preliminaries}
\label{section:prelim}
We set some notations and recall basic results that will be used in our construction. We refer the readers to \cite[Chapter 3 and 4]{bpt} and \cite{vinzant} for details and proofs.

\subsubsection{Sums of squares} Let $H_{n,d}$ be the space of homogeneous polynomials of degree $d$ in $n$ variables and let $\Sigma_{n,2d} \subset H_{n,2d}$ be the set of polynomials that can be decomposed as a sum of squares of polynomials, which we call shortly SOS polynomials. The set $\Sigma_{n,2d}$ is a full dimensional cone in $H_{n,2d}$, included in $P_{n,2d}$, the cone of non-negative polynomials in $H_{n,2d}$. If $f \in \Sigma_{n,2d}$, $f = p_1^2 + \dots + p_s^2$, then $p_i \in H_{n,d}$ for all $1 \le i \le s$ (that is, all the polynomials in the SOS decomposition of a homogeneous polynomial of degree $2d$ are homogeneous polynomials of degree $d$).

\subsubsection{The Gram Spectrahedron} Let $m$ be the vector of monomials of degree $d$ in $n$ variables under some monomial ordering and let $N$ be the length of this vector ($N = \dim H_{n,d}$). A homogeneous polynomial $f \in \R[x_1, \dots, x_n]$ of degree $2d$ is a sum of squares if and only if there exists a positive semidefinite matrix $A \in \R^{N \times N}$ such that
$$f = m^T A m.$$
The space of all such matrices is called the Gram spectrahedron of $f$. It is a compact convex set in the space of matrices.

\subsubsection{The dual cone and bilinear forms}
\label{section:bilineal}
The dual cone $K^*$ of a convex cone $K$  in a real vector space $V$ is the set of all linear functionals in the dual space $V^*$ that are nonnegative on $K$: $K^*=\{\ell\in V^*\,:\,\ell(x)\ge 0,\, \forall x\in K\}$.
Given a form $\ell \in H_{n,2d}^*$, we can define a bilinear form $Q_{\ell}: H_{n,d} \times H_{n,d} \rightarrow \R$, $Q_{\ell}(p,q) = \ell(pq)$. For computations, it is convenient to fix a monomial base $\mathcal{M}$ of $H_{n,d}$ and represent $Q_{\ell}$ in the coordinates of $\mathcal{M}$ as a matrix $\Qmat \in \R^{N \times N}$, where $N = \dim H_{n,d}$. If $\ell \in \Sigma_{n,2d}^*$, then $Q_{\ell}(p,p) \ge 0$ for all $p \in H_{n,d}$. That is, the form $Q_{\ell}$ is positive semi-definite and so is the matrix $\Qmat$. The converse is also true: if $\ell \in H_{n,2d}^*$ and $Q_{\ell}$ is positive semidefinite, then $\ell \in \Sigma_{n,2d}^*$.

If $f$ is in the boundary of $\Sigma_{n,2d}$, there exists $\ell \in \Sigma_{n,2d}^*$ such that $\ell(f) = 0$. If $f = p_1^2 + \dots + p_s^2$ then $\ell(p_i^2) = 0$ for all $1 \le i \le s$. Therefore, $Q_{\ell}(p_i, p_i) = 0$ and $(p_i)_{\mathcal{M}}$ is in the kernel of the matrix $\Qmat$ for all $1 \le i \le s$ (since $\Qmat$ is positive semidefinite).
%An element $v \in K \subset V$ is in the boundary of $K$ if and only if there exists $\ell \in K^*$ such that $\ell(v) = 0$.

\section{Sum of 8 polynomials}

We give an example of a polynomial of degree 4 in 5 variables that is the sum of 8 squares and cannot be decomposed as the sum of 7 squares.

\subsection{Construction}
Our starting point is a strictly positive polynomial of degree 4 in 4 variables in the boundary of $\Sigma_{4,4}$.
In \cite{blekherman}, G. Blekherman provides formulas for constructing such polynomials. We use the example given in \cite[Example 4.3]{valdettaro2020} following those formulas.

\begin{example}
\label{ex:44sumOf4}
Let $p_1, p_2, p_3, p_4 \in \R[x_1, x_2, x_3, x_4]$,
\begin{align*}
p_1 &= x_1^2- x_4^2, \\
p_2 &= x_2^2- x_4^2, \\
p_3 &= x_3^2- x_4^2, \\
p_4 &= -x_1^2 - x_1 x_2 - x_1 x_3 + x_1 x_4 - x_2 x_3 + x_2 x_4 + x_3 x_4,
\end{align*}
and set $g = p_1^2+p_2^2+p_3^2+p_4^2$. Then $g$ is a strictly positive polynomial in the boundary of $\Sigma_{4,4}$. The decomposition of $g$ as a sum of squares is unique up to orthogonal transformations (see \cite[Section 3.13]{capco}).
\end{example}

We add to this example four new polynomials in a ring with one more variable.
\begin{example}
\label{ex:54sumOf8}
In $\R[x_1, \dots, x_5]$, let $p_1, p_2, p_3, p_4$ be as in Example \ref{ex:44sumOf4},
and set
$$p_5 = x_1x_5, \quad p_6 = x_2x_5, \quad p_7 = x_3 x_5, \quad p_8 = x_4 x_5.$$
Let $g = p_1^2+p_2^2+p_3^2+p_4^2+p_5^2+p_6^2+p_7^2 + p_8^2$. Then $g \in \Sigma_{5,4}$ is the sum of 8 squares and cannot be decomposed as the sum of 7 squares.
\end{example}

\begin{remark} The intuitive idea behind this example is that since the monomial $x_5^4$ is not in $g$, the monomials $x_i^2 x_5^2$, $1 \le i \le 4$, in $g$ can only be obtained from a product $(x_i x_5)^2$, hence $x_1x_5, \dots, x_4x_5$ can be thought as \emph{new} variables.
\end{remark}

We will prove the claim in the example by brute force, that is, we will suppose that there is a decomposition of $g$ as a sum of 7 squares and prove that the resulting equations on the coefficients have no real solution.
To reduce the redundancy and simplify the computations, we show first that the polynomials in the decomposition can be assumed to be in triangular shape.

\begin{lemma}
\label{lemma:triangular}
Let $\{p_1, \dots, p_s\} \subset k[x_1, \dots, x_n]$ be a set of $s$ linearly independent homogeneous polynomials of the same degree $d$ and let $g$ be a sum of squares, $g = \sum_{i=1}^t q_i^2$, where $q_i \in \langle p_1, \dots, p_s\rangle_{\R}$, $1 \le i \le t$ (each $q_i$ is a real linear combination of $p_1, \dots, p_s$). Then for some $t' \le \min(t, s)$ there exists a decomposition of $g$ as a sum of $t'$ squares $g = \sum_{i=1}^{t'}, \tilde q_i^2$ where the polynomials $\tilde q_i \in H_{n,d}$ are in triangular shape with respect to $p_1, \dots, p_s$. That is, for any $1 \le i \le t'$, $\tilde q_i \in \langle p_i, \dots, p_s \rangle_\R$.
\end{lemma}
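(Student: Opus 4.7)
The plan is to perform an orthogonal triangularization on the vector of summands, analogous to an LQ (or Householder-style) decomposition. First I would set up coordinates: since $p_1,\dots,p_s$ are linearly independent, I can uniquely write $q_i = \sum_{j=1}^{s} a_{ij}\, p_j$, so the $s$-coordinates $c_j(q)$ of any $q\in\langle p_1,\dots,p_s\rangle_\R$ are well defined. The central observation is that the sum of squares of a vector $Q=(q_1,\dots,q_t)^T \in H_{n,d}^t$ is a quadratic form preserved by orthogonal transformations: for any $O\in O(t)$, letting $Q' = O Q$, one has $\sum_i (q'_i)^2 = (Q')^T Q' = Q^T O^T O Q = Q^T Q = g$.

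Next I would eliminate coordinates iteratively. At the start of step $i$ (with $i=1,\dots,\min(t,s)$), I maintain the inductive hypothesis that $q_i,q_{i+1},\dots,q_t \in \langle p_i,\dots,p_s\rangle_\R$ (vacuous at $i=1$). Let $v = \bigl(c_i(q_i),c_i(q_{i+1}),\dots,c_i(q_t)\bigr) \in \R^{t-i+1}$ be the column of $p_i$-coefficients. Choose an orthogonal transformation of $\R^{t-i+1}$ sending $v$ to $(\|v\|,0,\dots,0)^T$, extend by the identity on the first $i-1$ coordinates to an element of $O(t)$, and apply it to $Q$. By the observation above, $g$ is unchanged, and after this step the only polynomial among $q_i,\dots,q_t$ with a nonzero $p_i$-coefficient is $q_i$; in other words the new $q_{i+1},\dots,q_t$ lie in $\langle p_{i+1},\dots,p_s\rangle_\R$. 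The first $i-1$ entries of $Q$ are untouched, preserving their previously established triangular shape.

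After $\min(t,s)$ steps I obtain polynomials $\tilde q_1,\dots,\tilde q_t$ with $\tilde q_i \in \langle p_i,\dots,p_s\rangle_\R$ for all $i\le\min(t,s)$. If $t>s$, the remaining $\tilde q_{s+1},\dots,\tilde q_t$ live in $\langle p_{s+1},\dots,p_s\rangle_\R=\{0\}$ and hence vanish, so discarding zeros leaves $t'\le\min(t,s)$ nonzero terms whose sum of squares still equals $g$.

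I do not expect a serious obstacle here: the argument is essentially a coordinate-wise Householder reduction, and the only subtlety is checking that the orthogonal action on the tuple of polynomials really does preserve the (scalar) sum of squares, which is immediate from the identity $Q^T O^T O Q = Q^T Q$. Linear independence of the $p_j$ is used only to guarantee unambiguous coordinates, and the bound $t'\le\min(t,s)$ falls out automatically from the iterative process.
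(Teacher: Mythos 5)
Your argument is correct and is essentially the same as the paper's: both amount to a QR-type orthogonal triangularization of the coefficient matrix $A=(a_{ij})$, using that an orthogonal change of the tuple $(q_1,\dots,q_t)$ preserves $\sum_i q_i^2 = v^T A^T A v$. The only difference is bookkeeping — the paper first passes to the Gram matrix $M=A^TA$, takes a rank factorization $M=B^TB$ with $B\in\R^{t'\times s}$, and then applies QR to $B$, which produces exactly $t'=\operatorname{rank}(M)$ summands, whereas you apply Householder reflections directly to the tuple and discard the zero terms at the end; both yield the claimed bound $t'\le\min(t,s)$.
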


\begin{proof}
Let $g = q_1^2 + \dots + q_{t}^2$, with $q_i = \sum_{j=1}^s a_{ij} p_j$, $a_{ij} \in \R$, $1 \le i \le t$, $1 \le j \le s$. Taking $v = \begin{pmatrix} p_1 & p_2 & \dots & p_s \end{pmatrix}^T$, we get
$$
 g  = v^T A^T A v,
$$
where $A \in \R^{t \times s}$ is the matrix with entries $a_{ij}$.

The matrix $M = A^T A \in \R^{s \times s}$ is a positive semidefinite matrix of rank at most $\min(t, s)$. Let $t'$ be the rank of $M$. Any such matrix can be decomposed as $M = B^T B$, where $B \in \R^{t' \times s}$. Now let $B = QR$ be a QR decomposition of $B$, with $Q \in \R^{t' \times t'}$ orthogonal and $R \in \R^{t' \times s}$ upper triangular. Then $B^T B = R^T R$ and the formula $g = v^T R^T R v$ gives the decomposition of $g$ in triangular shape, taking $\tilde q_i$ as the $i$th element of $R v$.
\end{proof}

\begin{proposition}
The polynomial $g$ in Example \ref{ex:54sumOf8} is a sum of 8 squares and cannot be decomposed as the sum of 7 squares.
\end{proposition}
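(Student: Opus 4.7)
The ``at most 8 squares'' direction is given by Example~\ref{ex:54sumOf8}, so only the lower bound requires argument. My strategy is to assume a hypothetical decomposition $g=\sum_{i=1}^{7}q_i^2$ and combine the duality from Section~\ref{section:bilineal} with Lemma~\ref{lemma:triangular} to constrain the $q_i$ so tightly that the remaining coefficient equations become infeasible over $\R$.

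The first step is to show that each $q_i$ must lie in the $8$-dimensional space $\langle p_1,\dots,p_8\rangle_\R$. Since $g_4:=p_1^2+p_2^2+p_3^2+p_4^2$ is on the boundary of $\Sigma_{4,4}$, a witness $\ell_4\in\Sigma_{4,4}^*$ with $\ell_4(g_4)=0$ exists, and uniqueness of the Gram matrix of $g_4$ lets me pick $\ell_4$ with $\ker Q_{\ell_4}=\langle p_1,\dots,p_4\rangle_\R$. I then extend $\ell_4$ to $\ell\in H_{5,4}^*$ by declaring $\ell$ to vanish on every monomial of odd degree in $x_5$ and on every $x_\alpha x_\beta x_5^2$, and by setting $\ell(x_5^4)>0$. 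A direct block computation shows that $Q_\ell$ is block-diagonal with blocks $Q_{\ell_4}$, $0_{4\times 4}$ (on the $x_i x_5$ monomials), and $\ell(x_5^4)$; hence $Q_\ell\succeq 0$ with kernel exactly $\langle p_1,\dots,p_8\rangle_\R$, and $\ell(g)=0$ by construction. By Section~\ref{section:bilineal}, each $q_i$ lies in $\ker Q_\ell=\langle p_1,\dots,p_8\rangle_\R$, and Lemma~\ref{lemma:triangular} with $s=8$, $t=7$ reduces us to the triangular form $q_i=\sum_{j=i}^{8}a_{ij}p_j$.

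Write $q_i=u_i+x_5 v_i$ with $u_i=\sum_{j=1}^{4}a_{ij}p_j$ and $v_i=\sum_{k=1}^{4}a_{i,k+4}x_k$, and separate $g=\sum q_i^2$ by powers of $x_5$:
\begin{equation*}
\sum_{i=1}^{7}u_i^2=g_4,\qquad \sum_{i=1}^{7}u_i v_i=0,\qquad \sum_{i=1}^{7}v_i^2=x_1^2+x_2^2+x_3^2+x_4^2.
\end{equation*}
The first identity, combined with the uniqueness of the Gram matrix of $g_4$, forces $UU^T=I_4$ for the $4\times 7$ matrix $U$ whose columns are the coefficients of the $u_i$ in $(p_1,\dots,p_4)$. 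The triangular form makes $U$ lower-triangular in its first four columns with its last three columns zero, so its leading $4\times 4$ block is both lower-triangular and orthogonal, hence diagonal with entries $\pm 1$. Replacing $q_i$ by $-q_i$ as needed, we may take $u_i=p_i$ for $i=1,\dots,4$ and $u_i=0$ for $i=5,6,7$.

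The cross identity then collapses to $\sum_{i=1}^{4}p_i v_i=0$ in $H_{4,3}$. The main obstacle, and the computational heart of the argument, is to verify that expanding this identity and equating the coefficients of the $20$ cubic monomials in $x_1,\dots,x_4$ forces $v_1=v_2=v_3=v_4=0$; the nontrivial mixed terms of $p_4$ (in particular the $x_1 x_2 x_3$ and $x_i x_j x_4$ coefficients) should supply the critical equations. Granted this, the third identity reduces to $v_5 v_5^T+v_6 v_6^T+v_7 v_7^T=I_4$ in $\R^{4\times 4}$, whose left-hand side has rank at most $3$, contradicting the rank $4$ of the right-hand side and completing the proof.
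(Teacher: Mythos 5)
Your proposal is correct in outline, and its second half takes a genuinely different route from the paper. For the first step (forcing each $q_i$ into $\langle p_1,\dots,p_8\rangle_\R$) both arguments construct a dual witness $\ell\in\Sigma_{5,4}^*$ whose matrix $Q_\ell$ is block-diagonal with a zero block on the $x_ix_5$ monomials and a positive entry on $x_5^4$; the paper finds the whole two-parameter family of such witnesses by a direct computation, while you build one by extending a witness $\ell_4$ for $g_4$. The one under-justified point there is your claim that ``uniqueness of the Gram matrix of $g_4$'' lets you choose $\ell_4$ with $\ker Q_{\ell_4}$ equal to $\langle p_1,\dots,p_4\rangle_\R$ and nothing more: a nonzero $\ell_4\in\Sigma_{4,4}^*$ vanishing on $g_4$ could a priori have a larger kernel, and you need the kernel to be exactly $4$-dimensional for the containment $q_i\in\ker Q_\ell$ to give you $q_i\in\langle p_1,\dots,p_8\rangle_\R$. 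This is true for a suitable choice (the explicit $10\times 10$ matrix $A$ in the paper is such an $\ell_4$, of rank $6$), but it requires either a computation or a facial-duality argument you have not supplied.

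Where you genuinely diverge is the infeasibility step. The paper feeds the triangular ansatz $g=f_1^2+\cdots+f_7^2$ into a Gr\"obner basis computation on the quadratic system in the $u_{ij}$ and obtains the basis $\{1\}$. You instead split $q_i=u_i+x_5v_i$ and stratify by powers of $x_5$, which replaces the Gr\"obner computation by three structured facts: (a) the uniqueness (up to orthogonal transformation) of the decomposition of $g_4$, already recorded in Example~\ref{ex:44sumOf4}, which together with triangularity pins down $u_i=\pm p_i$ for $i\le 4$ and $u_i=0$ for $i\ge 5$; (b) the absence of a linear syzygy $\sum_{i=1}^4 p_iv_i=0$; and (c) the rank count showing three squares of linear forms cannot sum to $x_1^2+\cdots+x_4^2$. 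This is more conceptual and makes visible \emph{why} seven squares fail, at the cost of leaning on the quoted uniqueness result for $g_4$. Your step (b), which you correctly flag as the computational heart, does hold but you did not verify it: writing $v_i=a_ix_1+b_ix_2+c_ix_3+d_ix_4$, the coefficients of $x_1x_2x_3$, $x_1x_2x_4$, $x_1x_3x_4$, $x_2x_3x_4$ give $a_4+b_4+c_4=0$ and $a_4+b_4=a_4+c_4=b_4+c_4=d_4$, forcing $v_4=0$, after which the coefficients of $x_j^2x_k$ kill $v_1,v_2,v_3$. With that $16\times 20$ linear-algebra check written out, your argument is complete and constitutes a valid, computation-light alternative to the paper's proof.
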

\begin{proof}
See \cite{pitagorasWorksheet} for the computations in Maple \cite{maple}. We first prove that any polynomial in a SOS decomposition of $g$ is a linear combination of $\{p_1, \dots, p_8\}$.
We look for non-zero linear forms $\ell \in \Sigma_{5,4}^*$ that vanish in $g$. By \cite[Lemma 2.6]{blekherman} (see also   \cite[Proposition 5.2]{valdettaro2020}), $\ell$ must satisfy $\ell(p_i q) = 0$ for all $p_i$, $1 \le i \le 8$, and all $q \in H_{5,2}$. We associate to each linear form $\ell$ the bilinear form $Q_{\ell}(p, q) = \ell(pq)$ and consider the set of bilinear forms $\{Q_{\ell}: H_{5,2} \times H_{5,2}  \rightarrow \R \mid \ell \in H_{5,4}^*, \ell(p_i q) = 0 \text{ for all $p_i$, $1 \le i \le 8$, and all $q \in H_{5,2}$}\}$. For the monomial base
 $$\mathcal{M} = \{x_1^2, x_1x_2, x_1x_3, x_1x_4, x_2^2, x_2x_3, x_2x_4, x_3^2, x_3x_4, x_4^2, x_1x_5, x_2x_5, x_3x_5, x_4x_5, x_5^2\}$$
 we compute the space of matrices $\Qmat$ corresponding to those bilinear forms . We obtain a 2-dimensional space $E = \{\Qmat(t_1, t_2) : t_1, t_2 \in \R\}$, where
%$$\left(
%\begin{array}{r*{15}{l}}
% 6t_1 & -t_1 & -t_1 & t_1 & 0 & 6t_1 & -t_1 & t_1 & 0 & 6t_1 & t_1 & 0 & 6t_1 & 0 & 0\\
% -t_1 & 6t_1 & -t_1 & t_1 & 0 & -t_1 & -t_1 & t_1 & 0 & -t_1 & t_1 & 0 & -t_1 & 0 & 0\\
% -t_1 & -t_1 & 6t_1 & t_1 & 0 & -t_1 & -t_1 & t_1 & 0 & -t_1 & t_1 & 0 & -t_1 & 0 & 0\\
%  t_1 & t_1 & t_1 & 6t_1 & 0 & t_1 & t_1 & -t_1 & 0 & t_1 & -t_1 & 0 & t_1 & 0 & 0\\
%       0 & 0 & 0 & 0 & 0 & 0 & 0 & 0 & 0 & 0 & 0 & 0 & 0 & 0 & 0\\
%6t_1 & -t_1 & -t_1 & t_1 & 0 & 6t_1 & -t_1 & t_1 & 0 & 6t_1 & t_1 & 0 & 6t_1 & 0 & 0\\
% -t_1 & -t_1 & -t_1 & t_1 & 0 & -t_1 & 6t_1 & t_1 & 0 & -t_1 & t_1 & 0 & -t_1 & 0 & 0\\
%  t_1 & t_1 & t_1 & -t_1 & 0 & t_1 & t_1 & 6t_1 & 0 & t_1 & -t_1 & 0 & t_1 & 0 & 0\\
%       0 & 0 & 0 & 0 & 0 & 0 & 0 & 0 & 0 & 0 & 0 & 0 & 0 & 0 & 0\\
%6t_1 & -t_1 & -t_1 & t_1 & 0 & 6t_1 & -t_1 & t_1 & 0 & 6t_1 & t_1 & 0 & 6t_1 & 0 & 0\\
%t_1 & t_1 & t_1 & -t_1 & 0 & t_1 & t_1 & -t_1 & 0 & t_1 & 6t_1 & 0 & t_1 & 0 & 0\\
%0 & 0 & 0 & 0 & 0 & 0 & 0 & 0 & 0 & 0 & 0 & 0 & 0 & 0 & 0\\
%6t_1 & -t_1 & -t_1 & t_1 & 0 & 6t_1 & -t_1 & t_1 & 0 & 6t_1 & t_1 & 0 & 6t_1 & 0 & 0\\
%0 & 0 & 0 & 0 & 0 & 0 & 0 & 0 & 0 & 0 & 0 & 0 & 0 & 0 & 0\\
%0 & 0 & 0 & 0 & 0 & 0 & 0 & 0 & 0 & 0 & 0 & 0 & 0 & 0 & t_2
%\end{array}
%\right)
%$$
$\Qmat(t_1, t_2) = \left(\begin{array}{c|c|c}
t_1 A & & \\ \hline
 & \bm{0}_{4 \times 4}  & \\ \hline
 & & t_2
\end{array}\right) \in \R^{15 \times 15}
$ with
%\[
%\left(\begin{array}{c|c|c}
%A \in \R^{10 \times 10} & & \\ \hline
% & 0 \in \R^{4 \times 4} & \\ \hline
% & & B \in \R^{1 \times 1}
%\end{array}\right)
%\]
{\small
$$A = \begin{pmatrix}
6 & -1 & -1 & 1 & 6 & -1 & 1 & 6 & 1 & 6 \\
-1 & 6 & -1 & 1 & -1 & -1 & 1 & -1 & 1 & -1 \\
-1 & -1 & 6 & 1 & -1 & -1 & 1 & -1 & 1 & -1 \\
1 & 1 & 1 & 6 & 1 & 1 & -1 & 1 & -1 & 1 \\
6 & -1 & -1 & 1 & 6 & -1 & 1 & 6 & 1 & 6 \\
-1 & -1 & -1 & 1 & -1 & 6 & 1 & -1 & 1 & -1 \\
1 & 1 & 1 & -1 & 1 & 1 & 6 & 1 & -1 & 1 \\
6 & -1 & -1 & 1 & 6 & -1 & 1 & 6 & 1 & 6 \\
1 & 1 & 1 & -1 & 1 & 1 & -1 & 1 & 6 & 1 \\
6 & -1 & -1 & 1 & 6 & -1 & 1 & 6 & 1 & 6
\end{pmatrix}.
$$
}

A linear form $\ell \in H_{5,4}^*$ is in $\ell \in \Sigma_{5,4}^*$ iff the bilinear form $Q_{\ell}$  is positive semidefinite.
The matrix $\Qmat(t_1, t_2)$ is a block matrix, with a block depending on $t_1$ and a block depending on $t_2$. The matrix $A$ is positive semidefinite, so for any positive numbers $t_1$ and $t_2$ the matrix $\Qmat(t_1, t_2)$ is positive semidefinite. Setting $\Qmat = \Qmat(1,1)$, the resulting matrix has kernel of dimension 8:
$$
W = \langle p_1, \dots, p_8 \rangle_\R,
$$
which proves our first claim, because any polynomial in a SOS decomposition of $g$ must be in the kernel of $\Qmat$ (see Section \ref{section:bilineal}).

Now we prove that there is no decomposition of $g$ as a sum of 7 squares. By the first part and Lemma \ref{lemma:triangular}, we can assume that the polynomials in the decomposition are in triangular shape with respect to $p_1, \dots, p_8$. That is, we can assume
$$\arraycolsep=.4pt
\begin{array}{r*{18}{l}}
f_1 &= u_{11} p_1 &+ &u_{12} p_2 &+ &u_{13} p_3 &+ &u_{14} p_4 &+ &u_{15} p_5 &+ &u_{16} p_6 &+ &u_{17} p_7 &+ &u_{18} p_8, \\
f_2 &=            &  &u_{22} p_2 &+ &u_{23} p_3 &+ &u_{24} p_4 &+ &u_{25} p_5 &+ &u_{26} p_6 &+ &u_{27} p_7 &+ &u_{28} p_8, \\
f_3 &=            &  &           &  &u_{33} p_3 &+ &u_{34} p_4 &+ &u_{35} p_5 &+ &u_{36} p_6 &+ &u_{37} p_7 &+ &u_{38} p_8, \\
f_4 &=            &  &           &  &           &  &u_{44} p_4 &+ &u_{45} p_5 &+ &u_{46} p_6 &+ &u_{47} p_7 &+ &u_{48} p_8, \\
f_5 &=            &  &           &  &           &  &           &  &u_{55} p_5 &+ &u_{56} p_6 &+ &u_{57} p_7 &+ &u_{58} p_8, \\
f_6 &=            &  &           &  &           &  &           &  &           &  &u_{66} p_6 &+ &u_{67} p_7 &+ &u_{68} p_8, \\
f_7 &=            &  &           &  &           &  &           &  &           &  &           &  &u_{77} p_7 &+ &u_{78} p_8.
\end{array}
$$

The equation $g = f_1^2 + \dots + f_7^2$ defines a system of quadratic equations in the coefficients $u_{ij}$. To solve this system, we compute a Groebner basis of the ideal defined by the equations using the degree reverse lexicographical ordering. We obtain the basis $\{1\}$, which proves that there is no solution to the system of equations. That is, $g$ cannot be decomposed as a sum of 7 squares.
\end{proof}

\begin{remark} In the first part of the above proof, we have shown that the polynomial $g$ in Example \ref{ex:54sumOf8} is a sum of at most 8 linearly independent squares. This means that the Gram spectrahedron of $g$ contains points of rank at most $8$. In the second part we have shown that the Gram spectrahedron of $g$ contains no point of rank $7$. Both results combined imply that the Gram spectrahedron consists of only one point (or otherwise there would be points of different rank). That is, the decomposition of $g$ given in Example \ref{ex:54sumOf8} is the unique (up to orthogonal equivalence) decomposition of $g$ as a sum of squares.

This can also be verified computationally adding an eighth polynomial $f_8 = u_{88} x_4 x_5$ in the above proof and computing the Groebner basis of the resulting system of equations (see the auxiliary code \cite{pitagorasWorksheet} for this computation).
\end{remark}

\begin{remark}
If we follow this strategy to construct examples for other values of $n$, we would start with a sequence of parameters of degree 2 in $n-1$ variables, hence consisting of $n-1$ polynomials, and add $n-1$ more polynomials $x_1 x_{n}, x_2x_n, \dots, x_{n-1} x_{n}$. The total number of polynomials is then $2(n-1)$. If it also holds that the resulting polynomial cannot be decomposed as a sum of less polynomials, this bound would improve the bound given in \cite{scheiderer2017} only in the case $(n,d) = (5,4)$, which is the case studied in this paper, so we don't attempt to extend our construction to larger values of $n$.
\end{remark}

%\section*{Acknowledgments}
%The authors would like to thank Jose Capco, Gabriela Jeronimo, Daniel Perrucci and Claus Scheiderer for many fruitful discussions. Santiago Laplagne would like to thank Claus Scheiderer and all the crew of Konstanz University for their kind hospitality during his visit to Konstanz.

\bibliographystyle{amsplain}
\bibliography{rationalSOS}

\end{document}